\newcommand{\R}{\mathbb{R}}
\newcommand{\N}{\mathbb{N}}
\newcommand{\Pa}{\mathbb{P}}
\newtheorem{theorem}{Theorem}
\newtheorem{lemma}[theorem]{Lemma}
\def \Leb {{\mathcal{L}}}
\begin{document}
\title{Spectral bounds for the torsion function}
\author{{M. van den Berg} \\
School of Mathematics, University of Bristol\\
University Walk, Bristol BS8 1TW\\
United Kingdom\\
\texttt{mamvdb@bristol.ac.uk}\\}
\date{30 March 2017}\maketitle
\vskip 3truecm \indent

\begin{abstract}\noindent Let $\Omega$ be an open set in Euclidean space $\R^m,\, m=2,3,...$, and let $v_{\Omega}$ denote the torsion function for $\Omega$. It is known that $v_{\Omega}$ is bounded if and only if the bottom of the spectrum of the Dirichlet Laplacian acting in $\Leb^2(\Omega)$, denoted by $\lambda(\Omega)$, is bounded away from $0$. It is shown that the previously obtained bound $\|v_{\Omega}\|_{\Leb^{\infty}(\Omega)}\lambda(\Omega)\ge 1$ is sharp: for $m\in\{2,3,...\}$, and any $\epsilon>0$ we construct an open, bounded and connected set $\Omega_{\epsilon}\subset \R^m$ such that $\|v_{\Omega_{\epsilon}}\|_{\Leb^{\infty}(\Omega_{\epsilon})} \lambda(\Omega_{\epsilon})<1+\epsilon$.
An upper bound for $v_{\Omega}$ is obtained for planar, convex sets in Euclidean space $M=\R^2$, which is sharp in the limit of elongation.
For a complete, non-compact, $m$-dimensional Riemannian manifold $M$ with non-negative Ricci curvature, and without boundary it is shown that $v_{\Omega}$ is bounded if and only if the bottom of the spectrum of the Dirichlet-Laplace-Beltrami operator acting in $\Leb^2(\Omega)$ is bounded away from $0$.
\end{abstract}
\vskip 1truecm \indent
 \textbf{Keywords}: Torsion function; Dirichlet Laplacian; Riemannian manifold; non-negative Ricci curvature.\\

\noindent
{\it AMS} 2000 {\it subject classifications.} 58J32; 58J35; 35K20.\\

\medskip\noindent
{\it Acknowledgement.} MvdB acknowledges support by The Leverhulme Trust
through International Network Grant \emph{Laplacians, Random Walks, Bose Gas,
Quantum Spin Systems}.


\section{Introduction\label{sec1}}
Let $\Omega$ be an open set in $\R^m,$ and let $\Delta$ be the
Laplace operator acting in $L^2(\R^m)$. Let $(B(s),s\ge 0, \Pa_x,x\in \R^m)$ be Brownian motion
on $\R^m$ with generator $\Delta$. For $x\in \Omega$ we denote the first exit time, and expected lifetime of Brownian motion by

\begin{equation*}
T_{\Omega}=\inf\{s\ge 0: B(s)\notin \Omega\},
\end{equation*}
and
\begin{equation}\label{e2}
v_{\Omega}(x)=\mathbb{E}_x[T_{\Omega}],\, x\in \Omega,
\end{equation}
respectively, where $\mathbb{E}_x$ denotes the expectation associated with $\mathbb{P}_x$. Then
$v_{\Omega}$ is the torsion function for $\Omega$, i.e. the unique solution of
\begin{equation}\label{e3}
-\Delta v=1,\, v\in H_0^1(\Omega).
\end{equation}
The bottom of the spectrum of the Dirichlet Laplacian acting in $\Leb^2(\Omega)$ is denoted by
\begin{equation}\label{e4}
\lambda(\Omega)=\inf_{\varphi\in H_0^1(\Omega)\setminus\{0\}}\frac{\displaystyle\int_\Omega|D\varphi|^2}{\displaystyle\int_\Omega \varphi^2}.
\end{equation}
It was shown in \cite{vdB}, \cite{vdBC} that $\|v_{\Omega}\|_{\Leb^{\infty}(\Omega)}$ is finite if and only if  $\lambda(\Omega)>0$. Moreover, if $\lambda(\Omega)>0$, then
\begin{equation}\label{e5}
\lambda(\Omega)^{-1}\le \|v_{\Omega}\|_{\Leb^{\infty}(\Omega)}\le (4+3m\log 2)\lambda(\Omega)^{-1}.
\end{equation}
The upper bound in \eqref{e5} was subsequently improved (see \cite{HV}) to
\begin{equation*}
\|v_{\Omega}\|_{\Leb^{\infty}(\Omega)}\le \frac18(m+cm^{1/2}+8)\lambda(\Omega)^{-1},
\end{equation*}
where
\begin{equation*}
c=(5(4+\log 2))^{1/2}.
\end{equation*}

In Theorem \ref{the3} below we show that the coefficient $1$ of $\lambda(\Omega)^{-1}$ in the left-hand side of \eqref{e5} is sharp.
\begin{theorem}\label{the3}
For $m\in\{2,3,\dots\}$, and any $\epsilon>0$ there exists an open, bounded, and connected set $\Omega_{\epsilon}\subset\R^m$ such that
\begin{equation}\label{e11}
\|v_{\Omega_{\epsilon}}\|_{\Leb^{\infty}(\Omega_{\epsilon})} \lambda(\Omega_{\epsilon})<1+\epsilon.
\end{equation}
\end{theorem}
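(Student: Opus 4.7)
The inequality $\|v_\Omega\|_\infty\,\lambda(\Omega)\ge 1$ from \cite{vdB},\cite{vdBC} is proved by applying the weak maximum principle to the superharmonic function $\lambda(\Omega)\|\phi_1\|_\infty v_\Omega-\phi_1$, where $\phi_1>0$ is a first Dirichlet eigenfunction of $\Omega$. The resulting pointwise inequality $\phi_1\le \lambda(\Omega)\|\phi_1\|_\infty v_\Omega$, evaluated at a point $x^\ast$ where $\phi_1$ attains its maximum, yields $1\le \lambda(\Omega)v_\Omega(x^\ast)\le \lambda(\Omega)\|v_\Omega\|_\infty$. To approach equality I need $\Omega_\epsilon$ for which two defects are simultaneously small at $x^\ast$: the gap between $v_{\Omega_\epsilon}(x^\ast)$ and $\|v_{\Omega_\epsilon}\|_\infty$ (the torsion function should be nearly maximal at $x^\ast$), and the defect in the pointwise comparison $\phi_1(x^\ast)\le\lambda\|\phi_1\|_\infty v_\Omega(x^\ast)$ (the Laplacian comparison should be nearly saturated there).

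As a benchmark I would first check the elongated slab $(0,L)\times(0,b)^{m-1}$ with $L/b\to\infty$: separation of variables gives $\|v_\Omega\|_\infty\to b^2/8$ and $\lambda(\Omega)\to\pi^2/b^2$, so the product tends to $\pi^2/8>1$. This value matches the convex upper bound mentioned elsewhere in the paper (sharp in the elongation limit) and shows that no convex or essentially slab-like construction can suffice: the sets $\Omega_\epsilon$ must genuinely exploit the non-convex flexibility of $\R^m$ available once $m\ge 2$.

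The construction I would attempt is a dumbbell-type open connected set $\Omega_\epsilon=U_\epsilon\cup P_\epsilon\cup W_\epsilon$, where $U_\epsilon$ is a principal region whose first Dirichlet eigenvalue essentially determines $\lambda(\Omega_\epsilon)$, $P_\epsilon$ is a thin passage of small capacity so that it leaves $\lambda$ almost unchanged while ensuring $\Omega_\epsilon$ is connected and bounded, and $W_\epsilon$ is an auxiliary region chosen so that the maximum of $v_{\Omega_\epsilon}$ is attained in $W_\epsilon$ and is as close as possible to $1/\lambda(\Omega_\epsilon)$. The upper bound on $\lambda(\Omega_\epsilon)$ would come from a Rayleigh quotient using a test function supported essentially in $U_\epsilon$ and smoothly cut off near $P_\epsilon$; the matching lower bound from domain monotonicity combined with a capacity/perturbation estimate for $P_\epsilon$. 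For $v_{\Omega_\epsilon}$ I would use the probabilistic representation $v_{\Omega_\epsilon}(x)=\E_x[T_{\Omega_\epsilon}]$ together with comparisons to the torsion functions of the pieces, or construct an explicit subsolution adapted to the geometry of the three parts.

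The main obstacle is the simultaneous tightness of both estimates. Bringing the product below the slab value $\pi^2/8$, let alone down to $1+\epsilon$, requires that $v_{\Omega_\epsilon}(x^\ast)$ be pushed almost all the way up to $1/\lambda(\Omega_\epsilon)$, not merely to the cross-sectional $1/8$ that governs slab-type limits. Three independent scales---the diameter of $U_\epsilon$, the diameter and shape of $W_\epsilon$, and the width and length of $P_\epsilon$---must be balanced so that the error terms in the eigenvalue bound and in the torsion estimate are compatible and tend to zero as $\epsilon\to 0$. Once that three-scale balance is achieved, the inequality $\|v_{\Omega_\epsilon}\|_\infty\lambda(\Omega_\epsilon)<1+\epsilon$ follows by combining the above estimates at $x^\ast$.
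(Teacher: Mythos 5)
Your heuristic analysis of when the lower bound $\|v_\Omega\|_\infty\lambda(\Omega)\ge 1$ is nearly saturated is sound: one needs $\phi_1/\|\phi_1\|_\infty$ to be close to $1$ on a large part of $\Omega$, and the slab benchmark correctly rules out convex or slab-like shapes. But the proposal then stops at a sketch whose central construction is both unspecified and, on closer inspection, internally inconsistent. You want $\lambda(\Omega_\epsilon)\approx\lambda(U_\epsilon)$ with $\phi_1$ concentrated in $U_\epsilon$, yet you also want the maximum of $v_{\Omega_\epsilon}$ to be attained in the auxiliary lobe $W_\epsilon$ and to be close to $1/\lambda$. Since $v_{\Omega_\epsilon}(x^\ast)\ge 1/\lambda$ already holds at the maximum point $x^\ast$ of $\phi_1$ (which lies in $U_\epsilon$), achieving $\|v_{\Omega_\epsilon}\|_\infty\le(1+\epsilon)/\lambda$ already forces $v_{\Omega_\epsilon}\le(1+\epsilon)/\lambda$ throughout $U_\epsilon$ --- so the region $W_\epsilon$ is superfluous, and the real question, namely what shape $U_\epsilon$ must have so that $\|v_{U_\epsilon}\|_\infty\lambda(U_\epsilon)<1+\epsilon$, is exactly the statement to be proved. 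The dumbbell decomposition therefore circles back to the original problem without resolving it, and the concluding sentence ``once that three-scale balance is achieved, the inequality follows'' asserts the conclusion instead of deriving it: no shapes, no scales, and no estimates are actually produced.

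What is missing is the geometric mechanism that makes the first eigenfunction nearly flat. The paper's construction $\Omega_{\delta,N,L}$ is a large cube $C_L$ from which a periodic array of $N^m$ tiny balls of radius $\delta$ is removed (a ``crushed ice'' or perforated domain). Because the obstacles have small capacity, the first Dirichlet eigenfunction is close to constant away from the holes, so the comparison $\phi_1\le\lambda\|\phi_1\|_\infty v_\Omega$ is nearly an equality on most of $\Omega$. The quantitative work is carried out by comparing $\lambda(\Omega_{\delta,N,L})$ from above with the mixed Neumann--Dirichlet eigenvalue $\mu_{1,B(0;\delta),L/N}$ of a single fundamental cell (Lemma~\ref{lem2}), and by bounding the torsion function from above by the Neumann torsion function on that cell, with heat-kernel and eigenfunction-sup-norm estimates imported from \cite{vdBFNT}. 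Choosing $\delta=\delta^\ast(\alpha,N,L)$ so that $\mu_1\sim N^\alpha/L^2$ with $1<\alpha<2$ makes all error terms vanish as $N\to\infty$. Your proposal neither identifies this class of domains nor supplies any substitute for the Neumann-cell reduction and the sup-norm estimate on $\phi_1$, so as written it does not constitute a proof.
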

\noindent The set $\Omega_{\epsilon}$ is constructed explicitly in the proof of Theorem \ref{the3}.

It has been shown by L. E. Payne (see (3.12) in \cite{P}) that for any convex, open $\Omega\subset\R^m$ for which $\lambda(\Omega)>0$,
\begin{equation}\label{e9}
\|v_{\Omega}\|_{\Leb^{\infty}(\Omega)}\lambda(\Omega)\ge \frac{\pi^2}{8},
\end{equation}
with equality if $\Omega$ is a slab, i.e. the connected, open set, bounded by two parallel $(m-1)$-dimensional hyperplanes.
Theorem \ref{the2} below shows that for any sufficiently elongated, convex, planar set (not just an elongated rectangle) $\|v_{\Omega}\|_{\Leb^{\infty}(\Omega)}\lambda(\Omega)$ is approximately equal to $\frac{\pi^2}{8}$. We denote the width and the diameter of a bounded open set $\Omega$ by $w(\Omega)$ (i.e. the minimal distance of two parallel lines supporting $\Omega$), and $\textup{diam}(\Omega)=\sup\{|x-y|:x\in \Omega,\,y\in \Omega\}$ respectively.
\begin{theorem}\label{the2}If $\Omega$ is a bounded, planar, open, convex set with width $w(\Omega)$, and diameter $\textup{diam}(\Omega)$,
then
\begin{equation*}
\|v_{\Omega}\|_{\Leb^{\infty}(\Omega)}\lambda(\Omega)\le \frac{\pi^2}{8}\left(1+7\cdot3^{2/3}\left(\frac{w(\Omega)}{\textup{diam}(\Omega)}\right)^{2/3}\right).
\end{equation*}
\end{theorem}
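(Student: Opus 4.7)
The strategy is to bound $\|v_\Omega\|_{\Leb^\infty(\Omega)}$ and $\lambda(\Omega)$ separately. Set $w:=w(\Omega)$ and $d:=\textup{diam}(\Omega)$. Since $\Omega$ is contained in an infinite open slab of width $w$, domain monotonicity of the torsion function (equivalently, of expected exit times of Brownian motion) together with the explicit slab torsion function $y\mapsto y(w-y)/2$ yield at once
\[
\|v_\Omega\|_{\Leb^\infty(\Omega)}\le w^2/8.
\]
The remaining task is to prove $\lambda(\Omega)\le (\pi^2/w^2)\bigl(1+7\cdot 3^{2/3}(w/d)^{2/3}\bigr)$.

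Rotate and translate so that the minimum width is attained in the $y$-direction and write
\[
\Omega=\{(x,y):0<x<L,\ \ell(x)<y<u(x)\}\subset\R\times(0,w),
\]
where $\ell$ is convex, $u$ is concave on $[0,L]$, and $\ell(0)=\ell(L)=u(0)=u(L)$. Because the diameter is realised by two points of $\overline\Omega$ whose $y$-coordinates differ by at most $w$, $L\ge\sqrt{d^2-w^2}$. A short argument based on the fact that the support function of $\Omega$ has vanishing (one-sided) derivative at any minimum-width direction produces a common $x_0\in(0,L)$ at which $u(x_0)=w$ and $\ell(x_0)=0$, so the vertical chord length $h(x):=u(x)-\ell(x)$ satisfies $h(x_0)=w$. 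Since $h$ is concave on $[0,L]$ with $h(0)=h(L)=0$, the super-level set $\{h\ge w-\delta\}$ contains an interval of length at least $L\delta/w$ for every $\delta\in(0,w)$.

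To bound $\lambda(\Omega)$ from above I plug into the Rayleigh quotient \eqref{e4} the trial function
\[
\varphi(x,y)=\phi(x)\,\sin\!\left(\frac{\pi(y-\ell(x))}{h(x)}\right),
\]
with $\phi\in H_0^1(I)$ supported in an interval $I\subset\{h\ge w-\delta\}$ of length $|I|=L\delta/w$. Then $\varphi\in H_0^1(\Omega)$: the sine vanishes on the upper and lower boundary of $\Omega$, and $\phi$ vanishes at the endpoints of $I$. After the fibre change of variable $t=(y-\ell(x))/h(x)\in(0,1)$, the $y$-derivative contributes at most $\pi^2/(w-\delta)^2$ to the Rayleigh quotient; the $\phi'$ term contributes at most $\pi^2/|I|^2$ when $\phi$ is a cosine on $I$; and the cross terms from differentiating $\ell(x)$ and $h(x)$ inside the sine contribute of order $(|\ell'|+|h'|)^2/(w-\delta)^2$. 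Convexity of $\ell$, concavity of $h$, and the fact that $\ell$ and $w-h$ each vary by at most $\delta$ across $I$ give $|\ell'|,\,|h'|\lesssim w/L$ on the central sub-interval of $I$, whence this cross term contributes $O(1/L^2)$ and is subleading.

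Multiplying by $\|v_\Omega\|_{\Leb^\infty(\Omega)}\le w^2/8$,
\[
\|v_\Omega\|_{\Leb^\infty(\Omega)}\,\lambda(\Omega)\le\frac{\pi^2}{8}\left(\frac{w^2}{(w-\delta)^2}+\frac{w^2}{|I|^2}+O\bigl((w/L)^2\bigr)\right);
\]
with $|I|=L\delta/w$ the first two error terms balance when $\delta\sim w(w/L)^{2/3}$, producing a total error of order $(w/L)^{2/3}$, which by $L\ge\sqrt{d^2-w^2}$ equals $(w/d)^{2/3}$ up to a factor tending to $1$. The principal technical obstacle is to carry out the cross-term bookkeeping accurately enough to extract the explicit constant $7\cdot 3^{2/3}$; this will require quantitative pointwise and $L^2$-bounds on $|\ell'|$ and $|h'|$ on the middle portion of $I$ (from convexity/concavity), together with a sharpened simultaneous optimisation of $\delta$ and $|I|$ in the final inequality.
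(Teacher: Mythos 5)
Your slab bound $\|v_\Omega\|_{\Leb^\infty(\Omega)}\le w^2/8$ is exactly the paper's first step, and your eigenvalue bound via the fibre-adapted trial function $\phi(x)\sin\big(\pi(y-\ell(x))/h(x)\big)$ is a genuinely different route from the paper, which instead inscribes a rectangle of sides $h$ and $(1-h/w)L$ in the quadrilateral $z_1z_2l_1l_2\subset\Omega$ and uses domain monotonicity to get $\lambda(\Omega)\le \pi^2h^{-2}+\pi^2(1-h/w)^{-2}L^{-2}$, optimising in $h$. However, as written your argument has a genuine gap with respect to the statement actually being proved: the theorem asserts the explicit constant $7\cdot3^{2/3}$, while your computation yields only an order bound with unspecified $O(\cdot)$ terms, and you explicitly defer the cross-term bookkeeping needed to produce any constant. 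That bookkeeping is where the difficulty sits: $|\ell'|,|h'|\lesssim w/L$ holds only on the central part of $I$ and these derivatives can blow up like $\delta/\mathrm{dist}$ towards the ends of $I$, so they must be controlled in weighted form against $\phi^2$; moreover the mixed term $\phi'\sin(\cdot)\,\phi\cos(\cdot)$ does not integrate to zero because the phase depends on $x$. Until these constants are pinned down, the claimed inequality is not established.

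There is a second gap in passing from $L$ to $\textup{diam}(\Omega)$. From $L\ge\sqrt{d^2-w^2}$ you get $(w/L)^{2/3}\le (w/d)^{2/3}\big(1-(w/d)^2\big)^{-1/3}$, and the extra factor tends to $1$ only in the elongated limit $w/d\to 0$; it is unbounded as $w/d\to1$, whereas the theorem is asserted for every bounded planar convex set. You would need either a separate crude bound when $w/d$ is bounded below (e.g.\ via the inradius of a convex set being at least $w/3$), or a comparison that is uniform in the shape. The paper achieves uniformity cheaply: its Lemma \ref{lem1} gives $\textup{diam}(\Omega)\le 3L$ for the longest chord perpendicular to $z_1z_2$, and since $w\le L$ the optimised bound $(\pi^2/w^2)\big(1+(w/L)^{2/3}\big)^3$ can be linearised by $(1+t)^3\le 1+7t$ for $t\le1$, giving the explicit constant with no asymptotics and no derivative estimates at all. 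Two minor points: your normalisation $\ell(0)=\ell(L)=u(0)=u(L)$ need not hold (harmless), and the existence of a vertical chord of length exactly $w$ at some $x_0$ does require the standard fact that at a minimal-width direction the touching points of the two supporting lines can be joined by a segment perpendicular to them.
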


 In the Riemannian manifold setting we denote the bottom of the spectrum of the Dirichlet-Laplace-Beltrami operator by \eqref{e4}. We have the following.
\begin{theorem}\label{the1}Let $M$ be a complete, non-compact, $m$-dimensional Riemannian manifold, without boundary, and with non-negative Ricci curvature. There exists $K<\infty,$ depending on $M$ only, such that if $\Omega\subset M$ is open, and $\lambda(\Omega)>0,$ then
\begin{equation}\label{e8}
 \lambda(\Omega)^{-1}\le \|v_{\Omega}\|_{\Leb^{\infty}(\Omega)}\le 2^{(3m+8)/4}\cdot3^{m/2}K^2 \lambda(\Omega)^{-1},
 \end{equation}
 where $K$ is the constant in the Li-Yau inequality in \eqref{e21} below.
 \end{theorem}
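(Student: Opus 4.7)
The lower bound follows from a standard approximation argument. Exhaust $\Omega$ by relatively compact open subsets $\Omega_n \uparrow \Omega$; each $-\Delta_{\Omega_n}$ has a positive $\Leb^2$-normalized first Dirichlet eigenfunction $\varphi_n$ with eigenvalue $\lambda(\Omega_n) \ge \lambda(\Omega)$, and $\lambda(\Omega_n) \downarrow \lambda(\Omega)$. Pairing $\varphi_n$ with $-\Delta v_{\Omega_n} = 1$ and integrating by parts yields
\[
\int_{\Omega_n}\varphi_n \;=\; \lambda(\Omega_n)\int_{\Omega_n}\varphi_n v_{\Omega_n} \;\le\; \lambda(\Omega_n)\,\|v_\Omega\|_{\Leb^\infty(\Omega)}\int_{\Omega_n}\varphi_n,
\]
since $v_{\Omega_n}\le v_\Omega$ on $\Omega_n$ by monotonicity of exit times in the domain. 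Dividing by $\int\varphi_n>0$ and letting $n\to\infty$ gives $\|v_\Omega\|_{\Leb^\infty(\Omega)}\ge\lambda(\Omega)^{-1}$.

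For the upper bound, represent
\[
v_\Omega(x)\;=\;\int_0^\infty u(t,x)\,dt,\qquad u(t,x)\;=\;\int_\Omega p_\Omega(x,y,t)\,dy\;=\;\Pa_x(T_\Omega>t),
\]
a non-increasing function of $t$ bounded above by $1$. The plan is to produce a time $t_0 = c(m,K)\lambda(\Omega)^{-1}$ with $\|u(t_0,\cdot)\|_{\Leb^\infty(\Omega)}\le 1/2$. Once this is in place, the semigroup identity $u((k+1)t_0,x)=\int p_\Omega(x,y,t_0)\,u(kt_0,y)\,dy$ iterates to $\|u(kt_0,\cdot)\|_{\Leb^\infty}\le 2^{-k}$, and monotonicity of $u(\cdot,x)$ gives $\|v_\Omega\|_{\Leb^\infty}\le t_0\sum_{k\ge 0}2^{-k}=2t_0$, which is of the required form.

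The main difficulty in obtaining the uniform decay $\|u(t_0,\cdot)\|_{\Leb^\infty}\le 1/2$ is that $1\notin \Leb^2(\Omega)$ when $|\Omega|=\infty$, so the spectral decay $\|P_s^\Omega f\|_{\Leb^2}\le e^{-\lambda(\Omega) s}\|f\|_{\Leb^2}$ cannot be applied to the initial datum. I would circumvent this by combining spatial truncation with the Li-Yau estimate \eqref{e21}. For each $x$ and $R>0$, split
\[
u(t,x) \;\le\; \int_{B(x,R)\cap \Omega} p_\Omega(x,y,t)\,dy + \int_{M\setminus B(x,R)} p_M(x,y,t)\,dy.
\]
Bound the near-field integral by Cauchy-Schwarz in $y$ together with $\int p_\Omega(x,y,t)^2\,dy = p_\Omega(x,x,2t)$ and the on-diagonal Li-Yau inequality $p_M(x,x,2t)\le K\,V(x,\sqrt{2t})^{-1}$, producing a factor of order $K^{1/2}(V(x,R)/V(x,\sqrt{2t}))^{1/2}$. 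Bound the far-field integral by the off-diagonal Gaussian part of Li-Yau, integrated via Bishop-Gromov volume comparison, which is small once $R\gtrsim\sqrt{t}$. This gives a first decay bound for $u(t,x)$ that is not yet exponential in $\lambda(\Omega)t$.

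To activate the spectral gap, iterate once more via $u(t+2s,x)=P_s^\Omega(P_s^\Omega u(t,\cdot))(x)$, using the ultracontractive bound $\|P_s^\Omega\|_{\Leb^2\to\Leb^\infty}\le K^{1/2}\sup_x V(x,\sqrt{2s})^{-1/2}$ (again a consequence of the diagonal Li-Yau) together with the $\Leb^2$-contraction $\|P_s^\Omega\|_{\Leb^2\to \Leb^2}\le e^{-\lambda(\Omega) s}$; the intermediate $\Leb^2$-norm of $u(t,\cdot)$ is finite once $t$ is large enough, by the truncation step. The hard part is precisely this coupling: the first pass manufactures a finite $\Leb^2$-norm for the survival probability, and the second pass converts it into $\Leb^\infty$-decay at rate $e^{-\lambda(\Omega) s}$. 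Balancing $R$, $s$, and $t$ against $\lambda(\Omega)^{-1}$ and $K$ selects $t_0$ and, after tracking Bishop-Gromov volume ratios through both steps, yields the explicit constant $2^{(3m+8)/4}\cdot 3^{m/2}K^2$; the two factors of $K$ correspond to the two invocations of Li-Yau (once on the diagonal for the near-field truncation, once for the ultracontractive $\Leb^2\to\Leb^\infty$ bound).
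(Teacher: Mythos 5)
Your lower bound is correct and is essentially the paper's argument (the paper tests the heat kernel against $\varphi_{1,\Omega(q;R)}/\|\varphi_{1,\Omega(q;R)}\|_{\Leb^\infty}$ on the exhausting sets $\Omega(q;R)=\Omega\cap B(q;R)$ and lets $R\to\infty$; your integration by parts against $\varphi_n$ is an equivalent route). The upper bound, however, has a genuine gap at its pivot: you need $u(t,\cdot)\in\Leb^2(\Omega)$ for some large $t$, ``by the truncation step'', in order to apply $\|P^\Omega_s\|_{\Leb^2\to\Leb^2}\le e^{-\lambda(\Omega)s}$. This fails in general, even in the simplest case $M=\R^m$: for the slab $\Omega=\R^{m-1}\times(0,a)$ one has $\lambda(\Omega)=\pi^2/a^2>0$, yet $u(t,x)=\Pa_{x_m}[T_{(0,a)}>t]$ is independent of the lateral coordinates and bounded below by a positive constant on a set of infinite measure, so $\|u(t,\cdot)\|_{\Leb^2(\Omega)}=\infty$ for every $t$. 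The near-field/far-field split cannot manufacture square-integrability: your near-field bound $K^{1/2}\big(|B(x;R)|/|B(x;\sqrt{2t})|\big)^{1/2}$ is uniformly bounded away from $0$ in $x$ (it is at least of order $1$ once $R\gtrsim\sqrt{t}$, which the far field forces), so it gives no spatial decay at all. Moreover, your ultracontractive bound $\|P^\Omega_s\|_{\Leb^2\to\Leb^\infty}\le K^{1/2}\sup_x |B(x;\sqrt{2s})|^{-1/2}$ involves $\inf_x|B(x;\sqrt{2s})|$, which on a general complete manifold with non-negative Ricci curvature is not controlled by $K$ and $m$; so even apart from the $\Leb^2$ issue the constants would not close to anything depending only on $m$ and $K$.

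The paper's proof inserts the spectral gap pointwise into the heat kernel instead of through an $\Leb^2$ norm of the survival probability, and this is exactly how the obstruction you correctly identified ($1\notin\Leb^2(\Omega)$) is circumvented: the only functions fed into the $\Leb^2$ contraction are $p_\Omega(x,\cdot;s)$, which are always square integrable. Concretely, on $\Omega(q;R)$ the spectral theorem gives $p_{\Omega(q;R)}(x,x;t)\le e^{-t\lambda(\Omega(q;R))/2}p_M(x,x;t/2)\le Ke^{-t\lambda(\Omega(q;R))/2}|B(x;(t/2)^{1/2})|^{-1}$; the interpolation $p_{\Omega(q;R)}(x,y;t)\le\big(p_{\Omega(q;R)}(x,x;t)p_{\Omega(q;R)}(y,y;t)\big)^{1/4}p_M(x,y;t)^{1/2}$ retains half a power of the Gaussian from the Li--Yau upper bound in \eqref{e21}, which is then converted back into a full heat kernel $p_M(x,y;6t)$ by the Li--Yau lower bound, with Bishop--Gromov bounding the resulting volume ratios by $2^{3m/4}\cdot3^{m/2}$. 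Integrating in $y$ by stochastic completeness yields $\int_{\Omega(q;R)}p_{\Omega(q;R)}(x,y;t)\,dy\le 2^{3m/4}\cdot3^{m/2}K^2e^{-t\lambda(\Omega(q;R))/4}$, and integrating in $t$ and letting $R\to\infty$ gives the right-hand side of \eqref{e8}; your doubling iteration is then unnecessary. If you want to salvage your outline, you must replace the step ``$\Leb^2$ norm of $u(t,\cdot)$ is finite'' by a mechanism of this kind that keeps the exponential factor attached to the kernel itself.
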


The proofs of Theorems \ref{the3}, \ref{the2}, and \ref{the1} will be given in Sections \ref{sec4}, \ref{sec3} and \ref{sec2} respectively.

Below we recall some basic facts on the connection between torsion function and heat kernel. It is well known (see \cite{EBD3}, \cite{GB}, \cite{GB1}) that the heat equation
\begin{equation*}
\Delta u(x;t)=\frac{\partial u(x;t)}{\partial t},\quad x\in M,\quad t>0, \
\end{equation*}
has a unique, minimal, positive fundamental solution $p_M(x,y;t),$
where $x\in M$, $y\in M$, $t>0$. This solution, the
heat kernel for $M$, is symmetric in $x,y$, strictly positive,
jointly smooth in $x,y\in M$ and $t>0$, and it satisfies the
semigroup property
\begin{equation*}
p_M(x,y;s+t)=\int_{M}dz\ p_M(x,z;s)p_M(z,y;t),
\end{equation*}
for all $x,y\in M$ and $t,s>0$, where $dz $ is the Riemannian
measure on $M$. See, for example, \cite{RS} for details. If $\Omega$ is an open subset of $M,$ then we denote the unique, minimal, positive fundamental solution of the heat equation on $\Omega$ by $p_{\Omega}(x,y;t)$, where $x\in \Omega,y\in \Omega,t>0$. This Dirichlet heat kernel satisfies,
\begin{equation*}
p_{\Omega}(x,y;t)\le p_M(x,y;t),\, x\in \Omega, y\in \Omega,t>0.
\end{equation*}
Define $u_{\Omega}:\Omega \times (0,\infty)\mapsto \R$ by
\begin{equation*}
    u_{\Omega}(x;t)=\int_{\Omega}dy\, p_{\Omega}(x,y;t).
\end{equation*}
Then,
\begin{equation*}
    u_{\Omega}(x;t)=\int_{\Omega}dy\, p_{\Omega}(x,y;t)=\Pa_x[T_{\Omega}>t],
\end{equation*}
and by \eqref{e2}
\begin{equation}\label{e17}
    v_{\Omega}(x)=\int_0^{\infty}dt\,\Pa_x[T_{\Omega}>t]=\int_0^{\infty}dt\,\int_{\Omega}dy\, p_{\Omega}(x,y;t).
\end{equation}
It is straightforward to verify that $v_{\Omega}$ as in \eqref{e17} satisfies \eqref{e3}.

\section{Proof of Theorem \ref{the3}\label{sec4}}

We introduce the following notation. Let $C_L=(-\frac{L}{2},\frac{L}{2})^{m/2}$ be the open cube with measure $L^m$, and delete from $C_L$, $N^m$ closed balls with radii $\delta$,
where each ball $B(c_i;\delta)$ is positioned at the centre of an open cube $Q_i$ with measure $(L/N)^m$. These open cubes are pairwise disjoint, and contained in $C_L$.
Let $0<\delta<\frac{L}{2N}$, and put
\begin{equation*}\label{e41}
\Omega_{\delta,N,L}=C_L-\cup_{i=1}^{N^m}B(c_i;\delta).
\end{equation*}
Below we will show that for any $\epsilon>0$ we can choose $\delta, N$ such that
\begin{equation*}
\|v_{\Omega_{\delta,N,L}}\|_{\Leb^{\infty}(\Omega_{\delta,N,L})} \lambda(\Omega_{\delta,N,L})<1+\epsilon.
\end{equation*}\begin{figure}[h]
\centering\includegraphics[scale=.6]{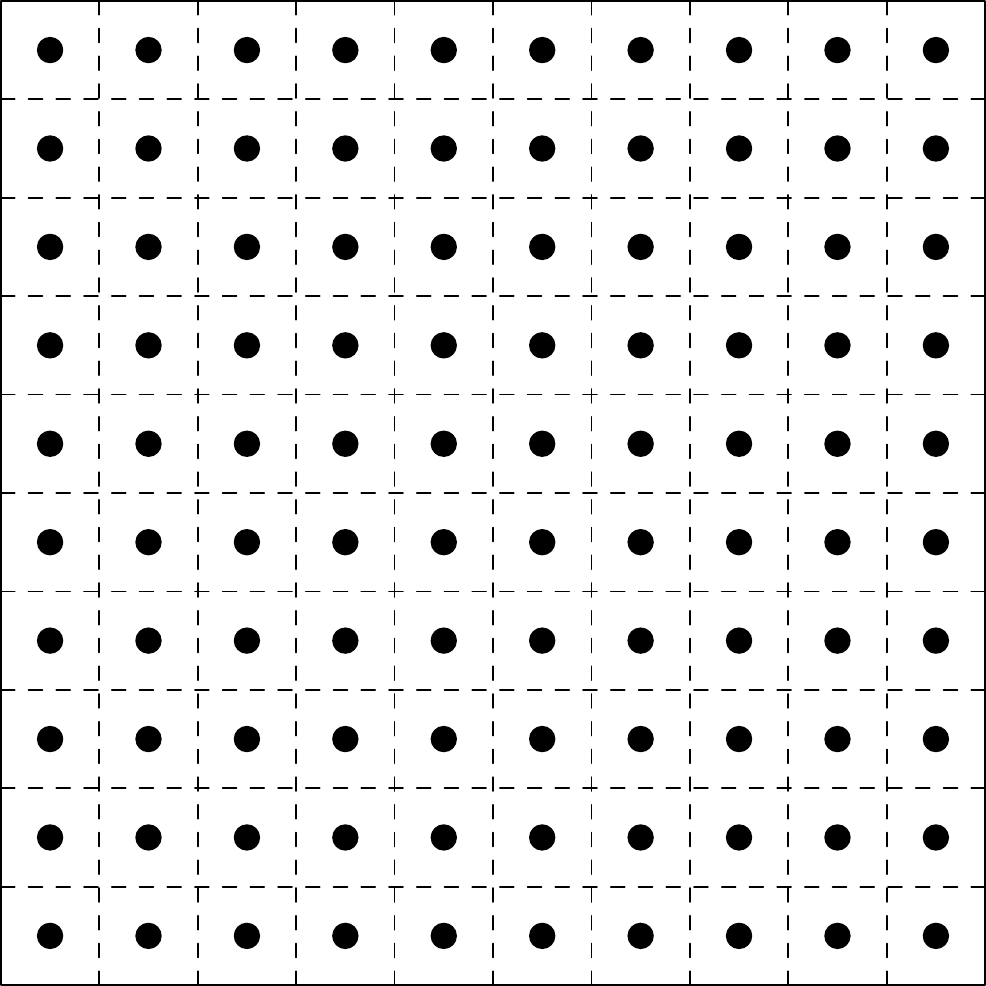}
\caption{$\Omega_{\delta,N,L}$ with $m=2,N=10,\delta=\frac{L}{8N}.$}\label{fig1}
\end{figure}

In Lemma \ref{lem2} below we show that $\lambda(\Omega_{\delta,N,L})$ is approximately equal to the first eigenvalue, $\mu_{1,B(0;\delta),L/N},$ of the Laplacian with Neumann boundary conditions on $\partial C_{L/N}$, and with Dirichlet boundary conditions on $\partial B(0;\delta)$.
The requirement $\mu_{1,B(0;\delta),L/N}$ not being too small stems from the fact that the approximation of replacing the Neumann boundary conditions on $C_L$ is a surface effect which should not dominate
the leading term $\mu_{1,B(0;\delta),L/N}$.
\begin{lemma}\label{lem2}
If $\delta\le \frac{L}{4N},\ N\ge 10$, and $\frac{N}{L^2}\le \mu_{1,B(0;\delta),L/N}$, then
\begin{equation*}
\lambda(\Omega_{\delta,N,L})\le \mu_{1,B(0;\delta),L/N}+32m\bigg(\frac{5}{4}\bigg)^m\bigg(\frac{N}{L^2}+\frac{1}{N^{1/2}}\mu_{1,B(0;\delta),L/N}\bigg)
\end{equation*}
\end{lemma}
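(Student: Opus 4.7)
My plan is to bound $\lambda(\Omega_{\delta,N,L})$ from above via the Rayleigh quotient
$$\lambda(\Omega_{\delta,N,L}) = \inf_{\psi\in H_0^1(\Omega_{\delta,N,L})\setminus\{0\}}\frac{\int_{\Omega_{\delta,N,L}}|D\psi|^2}{\int_{\Omega_{\delta,N,L}}\psi^2}$$
applied to a test function of the form $\chi\Phi$, where $\Phi$ is the cell-by-cell replication of the Neumann--Dirichlet ground state and $\chi$ is a Lipschitz cut-off vanishing on $\partial C_L$. Let $\varphi$ be the first eigenfunction on $Q_0\setminus B(0;\delta)$ with Neumann condition on $\partial Q_0=\partial C_{L/N}$ and Dirichlet condition on $\partial B(0;\delta)$, normalised so that $\int_{Q_0\setminus B(0;\delta)}|D\varphi|^2=\mu_1\int_{Q_0\setminus B(0;\delta)}\varphi^2$ with $\mu_1:=\mu_{1,B(0;\delta),L/N}$. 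Since $Q_0\setminus\overline{B(0;\delta)}$ is connected (as $m\ge 2$ and $\delta<L/(2N)$), the ground state $\varphi$ is simple and may be taken strictly positive; because every coordinate reflection $R_k:x_k\mapsto -x_k$ preserves the domain, $R_k\varphi$ is another nonnegative ground state, forcing $R_k\varphi=\varphi$. In particular $\varphi$ takes identical values on opposite faces of $\partial Q_0$. Defining $\Phi$ on $\Omega_{\delta,N,L}$ by placing the translate $x\mapsto\varphi(x-c_i)$ into each cell $Q_i\setminus B(c_i;\delta)$, this face-symmetry makes the translated copies agree as traces on every shared cell face, so $\Phi\in H^1(\Omega_{\delta,N,L})$; the Dirichlet condition gives $\Phi=0$ on $\bigcup_i\partial B(c_i;\delta)$.

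For the cut-off I take $\chi_0(t)=\min\bigl(1,\,(N/L)\max(0,L/2-|t|)\bigr)$ and $\chi(x)=\min_{1\le k\le m}\chi_0(x_k)$. Then $\chi=0$ on $\partial C_L$; $\chi\equiv 1$ on the inner cube $C_{L-2L/N}$, which is precisely the union of the $(N-2)^m$ cells whose closures do not touch $\partial C_L$; and $|\nabla\chi|\le N/L$ almost everywhere, so $\chi\Phi\in H_0^1(\Omega_{\delta,N,L})$. Let $I$ index the inner cells and $S$ the remaining ``collar'' cells, so $|I|=(N-2)^m$ and $|S|=N^m-(N-2)^m$. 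With $r:=(N-2)/N\ge 4/5$ for $N\ge 10$, the elementary bound $1-r^m\le m(1-r)$ gives $|S|/N^m\le 2m/N$ and $r^{-m}\le(5/4)^m$.

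To estimate the Rayleigh quotient of $\chi\Phi$, note that on $I$ one has $|D(\chi\Phi)|^2=|D\Phi|^2$ a.e., while on $S$ Young's inequality gives $|D(\chi\Phi)|^2\le(1+\alpha)\chi^2|D\Phi|^2+(1+\alpha^{-1})\Phi^2|D\chi|^2$ for any $\alpha>0$. Writing $\kappa:=\int_{Q_0\setminus B(0;\delta)}\varphi^2$ and using the one-cell eigenvalue identity in each cell, the numerator is bounded by
$$\mu_1 N^m\kappa+\alpha\mu_1|S|\kappa+(1+\alpha^{-1})(N/L)^2|S|\kappa,$$
while $\int(\chi\Phi)^2\ge|I|\kappa=(N-2)^m\kappa$. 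Dividing, then absorbing the multiplicative factor $r^{-m}\le 1+2m(5/4)^m/N$ into an additive correction, one obtains
$$\lambda(\Omega_{\delta,N,L})\le\mu_1+2m(5/4)^m\bigl((1+\alpha)\mu_1/N+(1+\alpha^{-1})N/L^2\bigr),$$
and the choice $\alpha=N^{1/2}$ (using $(1+\sqrt N)/N\le 2/\sqrt N$ and $1+N^{-1/2}\le 2$ for $N\ge 1$) yields the claimed bound, in fact with room to spare in the constant $32m(5/4)^m$.

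The delicate step is establishing $\Phi\in H^1(\Omega_{\delta,N,L})$: the translated copies of $\varphi$ on adjacent cells must have matching traces on each shared face, which is exactly what the reflection symmetry of the simple, positive ground state delivers. The hypothesis $\delta\le L/(4N)$ is used only to keep $B(0;\delta)$ strictly inside $Q_0$, while the hypothesis $N/L^2\le\mu_1$ plays no role in the present inequality and will be needed only when Lemma \ref{lem2} is combined with subsequent estimates.
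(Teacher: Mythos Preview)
Your argument is correct and follows the same strategy as the paper: build a test function by replicating the cell ground state $\varphi$ across all $N^m$ sub-cubes and multiply by a Lipschitz cut-off vanishing on $\partial C_L$, then estimate the Rayleigh quotient. Two technical differences are worth recording. First, the paper handles the cross term $2\chi\Phi\,\nabla\chi\cdot\nabla\Phi$ by Cauchy--Schwarz, producing a term $\tfrac{8N}{L}\mu_1^{1/2}$ which it then converts to $8N^{1/2}\mu_1$ using the hypothesis $N/L^2\le\mu_1$; your use of Young's inequality with parameter $\alpha=N^{1/2}$ bypasses this step entirely, so your observation that the hypothesis $N/L^2\le\mu_1$ is not actually needed for the lemma is correct (and your constant $4m(5/4)^m$ is sharper than the stated $32m(5/4)^m$). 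Second, you make explicit the reflection-symmetry argument guaranteeing that the cell-wise copies of $\varphi$ have matching traces on shared faces, so that $\Phi\in H^1(\Omega_{\delta,N,L})$; the paper simply asserts the ``periodic extension'' without comment.
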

\begin{proof}
Let $\varphi_{1,B(0;\delta),L/N}$ be the first eigenfunction (positive) corresponding to $\mu_{1,B(0;\delta),L/N}$, and normalised in $\Leb^2(C_{L/N}-B(0;\delta))$.
In order to prove the lemma we construct a test function by periodically extending $\varphi_{1,B(0;\delta),L/N}$ to all cubes $Q_1,\dots Q_{N^m}$ of $\Omega_{\delta,N,L}$. We denote this periodic extension by $f$. We define
\begin{equation*}
C_{L,N}=C_{L(1-\frac{2}{N})}.
\end{equation*}
So $C_{L,N}$ is the sub-cube of $C_L$ with the outer layer of cubes of size $L/N$ removed. Let
\begin{equation*}
\tilde{f}=\bigg(1-\frac{\textup{dist}(x,C_{L,N})}{L/(4N)}\bigg)_+f.
\end{equation*}
Then $\tilde{f}\in H_0^1(\Omega_{\delta,N,L}),$ and
\begin{equation}\label{e46}
\|\tilde{f}\|_{\Leb^2(\Omega_{\delta,N,L})}\ge \int_{C_{L,N}}f^2=(N-2)^m,
\end{equation}
since $f$ restricted to any of the cubes $Q_i$ in $\Omega_{\delta,N,L}$ is normalised. Furthermore
\begin{align*}
|D\tilde{f}|^2&\le \bigg(1-\frac{\textup{dist}(x,C_{L,N})}{L/(4N)}\bigg)^2|Df|^2+1_{C_L-C_{L,N}}\bigg(\bigg(\frac{4N}{L}\bigg)^2f^2+\frac{8N}{L}f|Df|\bigg)\nonumber \\ &
\le |Df|^2+\bigg(\frac{4N}{L}\bigg)^21_{C_L-C_{L,N}}f^2+\frac{8N}{L}1_{C_L-C_{L,N}}f|Df|.
\end{align*}
Hence
\begin{align}\label{e48}
&\int_{\Omega_{\delta,N,L}}|D\tilde{f}|^2\le \int_{\Omega_{\delta,N,L}}|Df|^2+\bigg(\frac{4N}{L}\bigg)^2\int_{C_L-C_{L,N}}f^2\nonumber \\ & \hspace{4cm} +\frac{8N}{L}\bigg(\int_{C_L-C_{L,N}}|Df|^2\bigg)^{1/2}\bigg(\int_{C_L-C_{L,N}}f^2\bigg)^{1/2}\nonumber \\ &
=N^m\mu_{1,B(0;\delta),L/N}+\big(N^m-(N-2)^m\big)\bigg(\bigg(\frac{4N}{L}\bigg)^2+\frac{8N}{L}\big(\mu_{1,B(0;\delta),L/N}\big)^{1/2}\bigg)\nonumber \\ &
\le N^m\mu_{1,B(0;\delta),L/N}+\big(N^m-(N-2)^m\big)\bigg(\bigg(\frac{4N}{L}\bigg)^2+8N^{1/2}\mu_{1,B(0;\delta),L/N}\bigg),
\end{align}
where we have used the last hypothesis in the lemma.
By \eqref{e46}, \eqref{e48}, the Rayleigh-Ritz variational formula, and the hypothesis $N\ge 10$,
\begin{align}\label{e49}
\lambda(\Omega_{\delta,N,L})&\le \mu_{1,B(0;\delta),L/N}\nonumber \\ & \ \ \ +\frac{N^m-(N-2)^m}{(N-2)^m}\bigg(\bigg(\frac{4N}{L}\bigg)^2+\big(8N^{1/2}+1\big)\mu_{1,B(0;\delta),L/N}\bigg)\nonumber \\ &\le
\mu_{1,B(0;\delta),L/N}+32m\bigg(\frac{5}{4}\bigg)^m\bigg(\frac{N}{L^2}+\frac{1}{N^{1/2}}\mu_{1,B(0;\delta),L/N}\bigg).
\end{align}
\end{proof}

To obtain an upper bound for $\|v_{\Omega_{\delta,N,L}}\|_{\Leb^{\infty}(\Omega_{\delta,N,L})}$, we change the Dirichlet boundary conditions on $\partial C_L$ to Neumann boundary conditions. This increases the corresponding heat kernel, torsion function, and $\Leb^{\infty}$ norm respectively. By periodicity, we have that
\begin{equation}\label{e50}
\|v_{\Omega_{\delta,N,L}}\|_{\Leb^{\infty}(\Omega_{\delta,N,L})}\le \|\tilde{v}_{C_{L/N}-B(0;\delta)}\|_{\Leb^{\infty}(C_{L/N}-B(0;\delta))},
\end{equation}
where $\tilde{v}_{C_{L/N}-B(0;\delta)}$ is the torsion function with Neumann boundary conditions on $\partial C_{L/N}$, and Dirichlet boundary conditions on $\partial B(0;\delta)$.
Denote the spectrum of the corresponding Laplacian by $\{\mu_j:=\mu_{j,B(0;\delta),L/N},j=1,2,\dots\}$, and let $\{\varphi_j:=\varphi_{1,B(0;\delta),L/N},j=1,2,\dots\}$
denote a corresponding orthonormal basis of eigenfunctions. We denote by $\pi_{\delta,N/L}(x,y;t), x\in C_{L/N}-B(0;\delta), y \in C_{L/N}-B(0;\delta), t>0$ the corresponding heat kernel. Then
\begin{equation}\label{e51}
\pi_{\delta,N/L}(x,y;t)=\sum_{j=1}^{\infty}e^{-t\mu_j}\varphi_j(x)\varphi_j(y),
\end{equation}
and
\begin{align}\label{e52}
\tilde{v}&_{C_{L/N}-B(0;\delta)}(x)\nonumber \\ &
=\int_0^{\infty}dt\ \int_{C_{L/N}-B(0;\delta)}dy\ \pi_{\delta,N/L}(x,y;t)\bigg(\frac{\varphi_1(y)}{\|\varphi_1\|}+1-\frac{\varphi_1(y)}{\|\varphi_1\|}\bigg)\nonumber \\ &
=\frac{1}{\mu_1}\frac{\varphi_1(x)}{\|\varphi_1\|}+\int_0^{\infty}dt\ \int_{C_{L/N}-B(0;\delta)}dy\ \pi_{\delta,N/L}(x,y;t)\bigg(1-\frac{\varphi_1(y)}{\|\varphi_1\|}\bigg)\nonumber \\ & \le\frac{1}{\mu_1}+ \int_0^Tdt\ \int_{C_{L/N}-B(0;\delta)}dy\ \pi_{\delta,N/L}(x,y;t)\nonumber \\ &\hspace{1cm}+\int_T^{\infty}dt\ \int_{C_{L/N}-B(0;\delta)}dy\ \pi_{\delta,N/L}(x,y;t)\bigg(1-\frac{\varphi_1(y)}{\|\varphi_1\|}\bigg)\nonumber \\ & \le\frac{1}{\mu_1}+T+\int_T^{\infty}dt\ \int_{C_{L/N}-B(0;\delta)}dy\ \pi_{\delta,N/L}(x,y;t)\bigg(1-\frac{\varphi_1(y)}{\|\varphi_1\|}\bigg),
\end{align}
where $\|\varphi_1\|=\|\varphi_1\|_{\Leb^{\infty}(C_{L/N}-B(0;\delta))}$.
By \eqref{e51}, we have that the third term in the right-hand side of \eqref{e52} equals
\begin{equation}\label{e53}
\sum_{j=1}^{\infty}\mu_j^{-1}e^{-T\mu_j}\varphi_j(x)\int_{C_{L/N}-B(0;\delta)}dy\ \varphi_j(y)\bigg(1-\frac{\varphi_1(y)}{\|\varphi_1\|}\bigg).
\end{equation}
The term with $j=1$ in \eqref{e53} is bounded from above by
\begin{align*}
\mu_1^{-1}\|\varphi_1\|\int_{C_{L/N}-B(0;\delta)}& \|\varphi_1\|\bigg(1-\frac{\varphi_1}{\|\varphi_1\|}\bigg)\nonumber \\ &
=\mu_1^{-1}\|\varphi_1\|\int_{C_{L/N}-B(0;\delta)}\big(\|\varphi_1\|-\varphi_1\big)\nonumber \\ &
\le \mu_1^{-1}\bigg(\|\varphi_1\|^2\bigg(\frac{L}{N}\bigg)^m-1\bigg),
\end{align*}
where we used the fact that $1=\int_{C_{L/N}-B(0;\delta)}\varphi_1^2\le \|\varphi_1\|\int_{C_{L/N}-B(0;\delta)}\varphi_1$.
It was shown on p.586, lines -3,-4, in \cite{vdBFNT} (with appropriate adjustment in notation) that
\begin{equation*}
\|\varphi_1\|^2\le \bigg(\frac{N}{L}\bigg)^m\bigg(1-s\mu_1-\frac{mL^2}{3esN^2}\bigg)^{-1},s\ge 0,
\end{equation*}
provided the last term in the round brackets is non-negative. The optimal choice for $s$ gives that
\begin{equation*}
\|\varphi_1\|^2\le \bigg(\frac{N}{L}\bigg)^m\bigg(1-\frac{(4m\mu_1)^{1/2}L}{(3e)^{1/2}N}\bigg)^{-1},\ \mu_1<\frac{3eN^2}{4mL^2}.
\end{equation*}
By further restricting the range for $\mu_1,$ we have that the first term with $j=1$ in \eqref{e53} is then bounded from above by
\begin{equation}\label{e57}
\mu_1^{-1}\frac{2L(m\mu_1/(3eN^2))^{1/2}}{1-2L(m\mu_1/(3eN^2))^{1/2}}\le \frac{(2m)^{1/2}L}{\mu_1^{1/2}N}, \ \mu_1\le\frac{3eN^2}{16mL^2}.
\end{equation}
The terms with $j\ge 2$ in \eqref{e53} give, by Cauchy-Schwarz for both the series in $j$, and the integral over $C_{L/N}-B(0;\delta)$, a contribution
\begin{align}\label{e58}
&\biggr\rvert\sum_{j=2}^{\infty}\mu_j^{-1}e^{-T\mu_j}\varphi_j(x)\int_{C_{L/N}-B(0;\delta)}\varphi_j\bigg(1-\frac{\varphi_1}{\|\varphi_1\|}\bigg)\biggr\rvert\nonumber \\ &
\le \mu_2^{-1}\sum_{j=2}^{\infty}e^{-T\mu_j}|\varphi_j(x)|\int_{C_{L/N}-B(0;\delta)}|\varphi_j|\nonumber \\ &
\le \mu_2^{-1}\bigg(\frac{L}{N}\bigg)^{m/2}\bigg(\sum_{j=2}^{\infty}e^{-T\mu_j}\bigg)^{1/2}\bigg(\sum_{j=2}^{\infty}e^{-T\mu_j}|\varphi_j(x)|^2\bigg)^{1/2}\nonumber \\ &
\le \mu_2^{-1}\bigg(\frac{L}{N}\bigg)^{m/2}\bigg(\sum_{j=2}^{\infty}e^{-T\mu_j}\bigg)^{1/2}\big(\pi_{\delta,N/L}(x,x;T)\big)^{1/2}.
\end{align}
To bound the first series in \eqref{e58}, we note that the $\mu_j$'s are bounded from below by the Neumann eigenvalues of the cube $C_{L/N}$. So choosing $T=(L/N)^2$ we get that
\begin{equation*}
\bigg(\sum_{j=2}^{\infty}e^{-L^2\mu_j/N^2}\bigg)^{1/2}\le \bigg(1+\sum_{j=1}^{\infty}e^{-\pi^2j^2}\bigg)^{m/2}\le \bigg(\frac43\bigg)^{m/2}.
\end{equation*}
Similarly to the proof of Lemma 3.1 in \cite{vdBFNT}, we have that
\begin{align}\label{e60}
\big(\pi_{\delta,N/L}(x,x;L^2/N^2)\big)^{1/2}&\le \big(\pi_{0,N/L}(x,x;L^2/N^2)\big)^{1/2}\nonumber \\ &\le \bigg(\frac{N}{L}\bigg)^{m/2}\bigg(1+2\sum_{j=1}^{\infty}e^{-\pi^2j^2}\bigg)^{m/2}\nonumber \\ &\le
\bigg(\frac43\bigg)^{m/2}\bigg(\frac{N}{L}\bigg)^{m/2}.
\end{align}
Finally, $\mu_2\ge \frac{\pi^2N^2}{L^2}$, together with \eqref{e50}, \eqref{e52}, \eqref{e57}, \eqref{e58}, \eqref{e60}, and the choice $T=(L/N)^2$ gives that
\begin{align}\label{e61}
\|v_{\Omega_{\delta,N,L}}\|_{\Leb^{\infty}(\Omega_{\delta,N,L})}\le \mu_1^{-1}+\frac{(2m)^{1/2}L}{\mu_1^{1/2}N}+\bigg(\frac43\bigg)^{m}\frac{L^2}{N^2}, \, \mu_1\le\frac{3eN^2}{16mL^2}.
\end{align}
{\it Proof of Theorem \ref{the3}.} Let $1<\alpha<2$.
By \eqref{e49} and \eqref{e61}, we have that
\begin{align}\label{e62}
\lambda(\Omega_{\delta,N,L})\|v_{\Omega_{\delta,N,L}}\|_{\Leb^{\infty}(\Omega_{\delta,N,L})}\le & \bigg(\mu_1+32m\bigg(\frac54\bigg)^m\bigg(\frac{N}{L^2}+\frac{1}{N^{1/2}}\mu_1\bigg)\bigg) \nonumber \\ & \times \bigg(\mu_1^{-1}+\frac{(2m)^{1/2}L}{\mu_1^{1/2}N}+\bigg(\frac43\bigg)^{m}\frac{L^2}{N^2}\bigg),
\end{align}
provided
\begin{equation*}
\frac{N}{L^2}\le \mu_1\le\frac{3eN^2}{16mL^2}.
\end{equation*}

First consider the planar case $m=2$. Recall Lemma 3.1 in \cite{vdBFNT}: for $\delta<L/(6N)$,
\begin{equation}\label{e64}
\frac{N^2}{100L^2}\bigg(\log \frac{L}{2\delta N}\bigg)^{-1}\le \mu_{1,B(0;\delta),L/N}\le \frac{8\pi N^2}{(4-\pi)L^2}\bigg(\log \frac{L}{2\delta N}\bigg)^{-1}.
\end{equation}
Let
\begin{equation}\label{e65}
\delta^*:=\delta^*(\alpha,N,L)=\frac{L}{2N}e^{-N^{2-\alpha}},
\end{equation}
where $1<\alpha<2$. Let $N_1\in \N$ be such that for all $N\ge N_1$, $\delta^*<L/(6N)$. We now use \eqref{e64} to see that there exists $C>1$ such that
\begin{equation}\label{e66}
C^{-1}\frac{N^{\alpha}}{L^2}\le \mu_{1,B(0;\delta^*),L/N}\le C\frac{N^{\alpha}}{L^2}.
\end{equation}
(In fact $C=\max\{100,8\pi/(4-\pi)\}$). We subsequently let $N_2\in \N$ be such that for all $N\ge N_2$,
\begin{equation*}
\frac{N}{L^2}\le C^{-1}\frac{N^{\alpha}}{L^2}\le C\frac{N^{\alpha}}{L^2}\le \frac{3eN^2}{16mL^2}.
\end{equation*}
By \eqref{e62}, \eqref{e66}, and all $N\ge \max\{N_1,N_2\}$ we have that
 \begin{equation}\label{e68}
\lambda(\Omega_{\delta^*,N,L})\|v_{\Omega_{\delta^*,N,L}}\|_{\Leb^{\infty}(\Omega_{\delta^*,N,L})}\le 1+\mathcal{C}\big(N^{1-\alpha}+N^{(\alpha-2)/2}\big),
\end{equation}
where $\mathcal{C}$ depends on $C$ and on $m$ only. Finally, we let $N_3\in \N$ be such that for all $N\ge N_3$,
\begin{equation*}
\mathcal{C}\big(N^{1-\alpha}+N^{(\alpha-2)/2}\big)<\epsilon.
\end{equation*}
We conclude that \eqref{e11} holds with $\Omega_{\epsilon}=\Omega_{\delta^*,N,L}$ with $\delta^*$ given by \eqref{e65}, and $N\ge \max\{N_1,N_2,N_3\}.$

Next consider the case $m=3,4,\dots$. We apply Lemma 3.2 in \cite{vdBFNT} to the case $K=B(0;\delta)$, and denote the Newtonian capacity of $K$ by $\textup{cap}(K)$. Then $\textup{cap}(B(0;\delta))=\kappa_m\delta^{m-2}$, where $\kappa_m$ is the Newtonian capacity of the ball with radius $1$ in $\R^m$. Then Lemma 3.2 gives that there exists $C\ge 1$ such that
\begin{equation}\label{e70}
C^{-1}\bigg(\frac{N}{L}\bigg)^m\delta^{m-2}\le\mu_{1,B(0;\delta,L/N)}\le C\bigg(\frac{N}{L}\bigg)^m\delta^{m-2},
\end{equation}
provided
\begin{equation}\label{e71}
\kappa_m\delta^{m-2}\le \frac{1}{16}(L/N)^{m-2}.
\end{equation}
We choose
\begin{equation}\label{e72}
\delta^*:=\delta^*(\alpha,N,L)=LN^{(\alpha-m)/(m-2)}.
\end{equation}
This gives inequality \eqref{e66} by \eqref{e70}.
The requirement \eqref{e71} holds for all $N\ge N_1$, where $N_1$ is the smallest natural number such that $N_1^{2-\alpha}\ge 16\kappa_m$.
The remainder of the proof follows the lines below \eqref{e66} with the appropriate adjustment of constants, and the choice of $\delta^*$ as in \eqref{e72}.
\hspace*{\fill }$\square $

We note that the choice $\alpha=\frac43$ in either \eqref{e65} or in \eqref{e72} gives, by \eqref{e68}, the decay rate
\begin{equation}\label{e73}
\lambda(\Omega_{\delta^*,N,L})\|v_{\Omega_{\delta^*,N,L}}\|_{\Leb^{\infty}(\Omega_{\delta^*,N,L})}\le 1+2\mathcal{C}N^{-1/3}.
\end{equation}

\section{Proof of Theorem \ref{the2}\label{sec3}}

In view of Payne's inequality \eqref{e9} it suffices to obtain an upper bound for $\|v_{\Omega}\|_{\Leb^{\infty}(\Omega)}\lambda(\Omega)$. We first observe, that by domain monotonicity of the torsion function, $v_{\Omega}$ is bounded by the torsion function for the (connected) set
bounded by the two parallel lines tangent to $\Omega$ at distance $w(\Omega)$. Hence
\begin{equation}\label{e33}
\|v_{\Omega}\|_{\Leb^{\infty}(\Omega)}\le \frac{w(\Omega)^2}{8}.
\end{equation}
In order to obtain an upper bound for $\lambda(\Omega)$, we introduce the following notation. For a planar, open, convex set, with finite measure, we let $z_1,z_2$ be two points on the boundary of $\Omega$ which realise the width. That is there are two parallel lines tangent to $\partial \Omega$, at $z_1$ and $z_2$ respectively, and at distance $w(\Omega)$. Let the $x$-axis be perpendicular to the vector $z_1z_2$, containing the point $\frac12(z_1+z_2)$. We consider the family of line segments parallel to the $x$-axis, obtained by intersection with $\Omega$, and let $l_1,l_2$ be two points on the boundary of $\Omega$ which realise the maximum length $L$ of this family. The quadrilateral with vertices, $z_1,z_2,l_1,l_2$ is contained in $\Omega$. This quadrilateral in turn contains a rectangle with side-lengths $h$, and $\big(1-\frac{h}{w(\Omega)}\big)L$ respectively, where $h\in [0,w(\Omega))
$ is arbitrary. Hence, by domain monotonicity of the Dirichlet eigenvalues, we have that
\begin{equation*}
\lambda(\Omega)\le \pi^2h^{-2}+\pi^2\bigg(1-\frac{h}{w(\Omega)}\bigg)^{-2}L^{-2}.
\end{equation*}
Minimising the right-hand side above with respect to $h$ gives that
\begin{equation*}
h= \frac{(w(\Omega)L^2)^{1/3}}{1+\big(\frac{L}{w(\Omega)}\big)^{2/3}}.
\end{equation*}
It follows that
\begin{equation}\label{e36}
\lambda(\Omega)\le\frac{\pi^2}{w(\Omega)^2}\bigg(1+\bigg(\frac{w(\Omega)}{L}\bigg)^{2/3}\bigg)^3.
\end{equation}
As $w(\Omega)\le L$ we obtain by \eqref{e36} that
\begin{align}\label{e37}
\lambda(\Omega)\le \frac{\pi^2}{w(\Omega)^2}\left(1+7\left(\frac{w(\Omega)}{L}\right)^{2/3}\right).
\end{align}
In order to complete the proof we need the following.
\begin{lemma}\label{lem1}
If $\Omega$ is an open, bounded, convex set in $\R^2$, and if $L$ is the length of the longest line segment in the closure of $\Omega$, perpendicular to $z_1z_2$, then
\begin{equation}\label{e38}
\textup{diam}(\Omega)\le 3L.
\end{equation}
\end{lemma}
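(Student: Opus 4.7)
The plan is to bound the diameter of $\Omega$ by separately bounding its horizontal and vertical extents, and then estimating the diagonal of the enclosing rectangle. With coordinates chosen so that $z_1=(0,w(\Omega)/2)$ and $z_2=(0,-w(\Omega)/2)$ and the $x$-axis horizontal, the set $\Omega$ lies between the horizontal supporting lines $y=\pm w(\Omega)/2$, so its vertical extent is at most $w(\Omega)$.

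The main step is to show that the horizontal extent is at most $2L$. Set $\alpha=\inf\{x:(x,y)\in\Omega\}$ and $\beta=\sup\{x:(x,y)\in\Omega\}$; since $z_1$ and $z_2$ lie on the $y$-axis one has $\alpha\le 0\le\beta$. Pick $Q=(\beta,y_Q)\in\overline{\Omega}$ realizing the supremum. By convexity the triangle with vertices $z_1,z_2,Q$ is contained in $\overline{\Omega}$, and a direct computation from these three vertex coordinates shows that its horizontal cross-section at height $y=y_Q$ is precisely the segment from $(0,y_Q)$ to $(\beta,y_Q)$, of length $\beta$. This segment therefore lies in $\overline{\Omega}$ at height $y_Q$, so it is contained in the horizontal chord at that height, whose length is at most $L$ by the definition of $L$. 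Hence $\beta\le L$; the same argument applied to a leftmost point of $\overline{\Omega}$ gives $-\alpha\le L$, so $\beta-\alpha\le 2L$.

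To close the argument, note that $w(\Omega)$ is by definition the \emph{minimum} width, so since the two vertical lines $x=\alpha$ and $x=\beta$ form a pair of parallel supporting lines at distance $\beta-\alpha$, one has $w(\Omega)\le\beta-\alpha\le 2L$. Using $\Omega\subset[\alpha,\beta]\times[-w(\Omega)/2,w(\Omega)/2]$, the diameter is bounded by the diagonal of this rectangle:
\[
\textup{diam}(\Omega)^2 \le (\beta-\alpha)^2 + w(\Omega)^2 \le 4L^2 + 4L^2 = 8L^2,
\]
giving $\textup{diam}(\Omega)\le 2\sqrt{2}\,L<3L$. The only nontrivial step is the triangle cross-section identity, which rests essentially on $z_1$ and $z_2$ lying on the same vertical line; the rest is routine bookkeeping with the minimum-width property and the bounding rectangle.
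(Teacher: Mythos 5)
Your argument is correct and rests on the same key observation as the paper's proof: since the width-realising chord $z_1z_2$ lies in $\overline{\Omega}$, convexity places the horizontal segment joining any point of $\overline{\Omega}$ to the line through $z_1,z_2$ inside $\overline{\Omega}$, so every point is within horizontal distance $L$ of that line, while the vertical extent is at most $w(\Omega)$. The only difference is the final assembly: the paper applies the triangle inequality along $d_1\to e_1\to z\to e_2\to d_2$ together with $w(\Omega)\le L$ to obtain $3L$, whereas you take the diagonal of the bounding rectangle and use $w(\Omega)\le\beta-\alpha\le 2L$, which even gives the slightly better constant $2\sqrt{2}$ and avoids invoking $w(\Omega)\le L$.
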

\begin{proof}
Let $d_1,d_2\in \partial\Omega$ such that $|d_1-d_2|=\textup{diam}(\Omega)$. We denote the projections of $d_1,d_2$ onto the line through $z_1,z_2$ by $e_1,e_2$ respectively. Let $z$ be the intersection of the lines through $z_1,z_2$ and $d_1,d_2$ respectively. Then, by the maximality of $L$, we have that $|d_1-e_1|\le L, |d_2-e_2|\le L.$ Furthermore, by convexity, $|e_1-z|+|e_2-z|\le w(\Omega)$. Hence,
\begin{align*}
|d_1-d_2|\le |d_1-e_1|+|e_1-z|+|d_2-e_2|+|e_2-z|\le 2L+w(\Omega)\le 3L.
\end{align*}
\end{proof}
By \eqref{e37}, we have that
\begin{equation*}
\lambda(\Omega)\le\frac{\pi^2}{w(\Omega)^2}\left(1+7\cdot 3^{2/3}\left(\frac{w(\Omega)}{\textup{diam}(\Omega)}\right)^{2/3}\right).
\end{equation*}
This implies Theorem \ref{the2} by \eqref{e33}.
\hspace*{\fill }$\square $

\section{Proof of Theorem \ref{the1}\label{sec2}}

We denote by $d:M\times M\mapsto \R^+$ the geodesic distance associated to $(M,g)$. For $x\in M,\,R>0,$ $B(x;R)=\{y\in M:d(x,y)<R\}$. For a measurable set $A\subset M$ we denote by $|A|$ its Lebesgue measure. The Bishop-Gromov Theorem (see \cite{BC}) states that if $M$ is a complete, non-compact, $m$-dimensional, Riemannian manifold with non-negative Ricci curvature, then for $p\in M$,
the map $r\mapsto\frac{\vert B(p;r)\vert}{r^m}$ is
monotone decreasing. In particular
\begin{equation}\label{e18}
\frac{\vert B(p;r_2)\vert}{\vert B(p;r_1)\vert}\le
\left(\frac{r_2}{r_1}\right)^m,\ 0<r_1\le r_2.
\end{equation}
Corollary 3.1 and Theorem 4.1 in \cite{LY}, imply that if $M$ is complete with
non-negative Ricci curvature, then for any $D_2>2$ and $0<D_1<2$ there
exist constants $0<C_1\le C_2<\infty$ such that for all $x\in
M,\ y\in M,\ t>0$,
\begin{equation}\label{e19}
C_1\frac{e^{-d(x,y)^2/(2D_1t)}}{ (\vert B(x;t^{1/2})\vert \vert
B(y;t^{1/2})\vert )^{1/2}}\le p_M(x,y;t)\le
C_2\frac{e^{-d(x,y)^2/(2D_2t)}}{ (\vert B(x;t^{1/2})\vert \vert
B(y;t^{1/2})\vert )^{1/2}}.
\end{equation}
Finally, since by \eqref{e18} the measure of any geodesic ball with radius $r$ is bounded polynomially in $r$, the theorems of Grigor'yan in \cite{GB} imply stochastic completeness. That is, for all $x\in M,$ and all
$t>0$,
\begin{equation*}
\int_Mdy\, p_M(x,y;t)=1.
\end{equation*}

\noindent {\it Proof of Theorem \ref{the1}.}
We choose $D_1=1,\, D_2=3$ in \eqref{e19}, and define the corresponding number $K=\max\{C_2,C_1^{-1}\}$. Then
\begin{equation}\label{e21}
K^{-1}e^{-d(x,y)^2/(2t)}\le (\vert B(x;t^{1/2})\vert \vert
B(y;t^{1/2})\vert )^{1/2}p_M(x,y;t)\le
Ke^{-d(x,y)^2/(6t)}.
\end{equation}

Let $q\in M$ be arbitrary, and let $R>0$ be such that $\Omega(q;R):=B(q;R)\cap \Omega\ne \emptyset$. The spectrum of the Dirichlet Laplacian acting in $L^2(\Omega(q;R))$ is discrete. Denote the bottom of this spectrum by $\lambda(\Omega(q;R))$. Then $\lambda(\Omega(q;R))\ge \lambda(\Omega)$. By the spectral theorem, monotonicity of Dirichlet heat kernels, and the Li-Yau bound \eqref{e21}, we have that
\begin{align}\label{e22}
p_{\Omega(q;R)}(x,x;t)&\le e^{-t\lambda(\Omega(q;R))/2}p_{\Omega(q;R)}(x,x;t/2)\nonumber \\ & \le e^{-t\lambda(\Omega(q;R))/2}p_M(x,x;t/2)\nonumber \\ &
\le Ke^{-t\lambda(\Omega(q;R))/2}|B(x;(t/2)^{1/2})|^{-1}.
\end{align}
By the semigroup property and the Cauchy-Schwarz inequality, for any open set $\Omega\subset M$, we have that
\begin{align}\label{e23}
p_{\Omega}(x,y;t)&=\int_{\Omega}dz\, p_{\Omega}(x,z;t/2)\,p_{\Omega}(z,y;t/2)\nonumber \\ &\le \left(\int_{\Omega}dz\, p_{\Omega}^2(x,z;t/2)\right)^{1/2}\left(\int_{\Omega}dz\, p_{\Omega}^2(z,y;t/2)\right)^{1/2}\nonumber \\ &=
\big(p_{\Omega}(x,x;t)\,p_{\Omega}(y,y;t)\big)^{1/2}.
\end{align}
We obtain by \eqref{e22}, \eqref{e23} (for $\Omega=\Omega(q;R)$), and $p_{\Omega(q;R)}(x,y;t)\le p_M(x,y;t),$ that
\begin{align}\label{e24}
p_{\Omega(q;R)}&(x,y;t)\le \big(p_{\Omega(q;R)}(x,x;t)\,p_{\Omega(q;R)}(y,y;t)\big)^{1/4}p_M(x,y;t)^{1/2}\nonumber \\ &\le K^{1/2}e^{-t\lambda(\Omega(q;R))/4}\big(|B(x;(t/2)^{1/2})||B(y;(t/2)^{1/2})|\big)^{-1/4}p_M^{1/2}(x,y;t).
\end{align}
By \eqref{e24} and \eqref{e21}, we have that
\begin{align}\label{e25}
p_{\Omega(q;R)}(x,y;t)\le & Ke^{-t\lambda(\Omega(q;R))/4}\big(|B(x;(t/2)^{1/2})||B(y;(t/2)^{1/2})|\big)^{-1/4} \nonumber \\ &\times\big(|B(x;t^{1/2})||
B(y;t^{1/2})|\big)^{-1/4}e^{-d(x,y)^2/(12t)}.
\end{align}
By the Li-Yau lower bound in \eqref{e21}, we can rewrite the right-hand side of \eqref{e25} to yield,
\begin{align}\label{e26}
p_{\Omega(q;R)}(x,y;t)&\le  K^2e^{-t\lambda(\Omega(q;R))/4}p_M(x,y;6t)\nonumber \\ & \times \frac{\big(|B(x;(6t)^{1/2})||B(y;(6t)^{1/2})|\big)^{1/2}}{\big(|B(x;(t/2)^{1/2})||B(y;(t/2)^{1/2})||B(x;t^{1/2})||
B(y;t^{1/2})|\big)^{1/4}}.
\end{align}
By Bishop-Gromov \eqref{e18}, we have that the volume quotients in the right-hand side of \eqref{e26} are bounded by $2^{3m/4}\cdot 3^{m/2}$ uniformly in $x$ and $y$.
Hence
\begin{equation*}
p_{\Omega(q;R)}(x,y;t)\le 2^{3m/4}\cdot 3^{m/2}K^2e^{-t\lambda(\Omega(q;R))/4}p_M(x,y;6t).
\end{equation*}
Since manifolds with non-negative Ricci curvature are stochastically complete, we have that
\begin{align*}
\int_{\Omega(q;R)}dy\,p_{\Omega(q;R)}(x,y;t)&\le 2^{3m/4}\cdot 3^{m/2}K^2e^{-t\lambda(\Omega(q;R))/4}\int_Mdy\,p_M(x,y;6t)\nonumber \\ &=2^{3m/4}\cdot 3^{m/2}K^2e^{-t\lambda(\Omega(q;R))/4}.
\end{align*}
Integrating the inequality above with respect to $t$ over $[0,\infty)$ yields,
\begin{equation*}
v_{\Omega(q;R)}(x)\le 2^{(3m+8)/4}\cdot3^{m/2}K^2\lambda(\Omega(q;R))^{-1}\le 2^{(3m+8)/4}\cdot3^{m/2}K^2\lambda(\Omega)^{-1}.
\end{equation*}
Finally letting $R\rightarrow \infty$ in the left-hand side above yields the right-hand side of \eqref{e8}.

The proof of the left-hand side of \eqref{e8} is similar to the one in Theorem 5.3 in \cite{vdB} for Euclidean space.
We have that
\begin{equation}\label{e30}
    v_{\Omega(q;R)}(x)=\int_0^{\infty}dt\,\int_{\Omega(q;R)}dy\,p_{\Omega(q;R)}(x,y;t).
\end{equation}
We first observe that $|\Omega(q;R)|<\infty$, and so the spectrum of the Dirichlet Laplacian acting in $L^2(\Omega(q;R))$ is discrete and is denoted by
$\{\lambda_j(\Omega(q;R)), j\in \N\}$, with a corresponding orthonormal basis of eigenfunctions $\{\varphi_{j,\Omega(q;R)}, j\in \N\}$. These eigenfunctions are in $\Leb^{\infty}(\Omega(q;R))$.
Then, by \eqref{e30} and the eigenfunction expansion of the Dirichlet heat kernel for $\Omega(q;R)$, we have that
\begin{align}\label{e31}
v_{\Omega(q;R)}(x)&\ge\int_0^{\infty}dt\,\int_{\Omega(q;R)}dy\,p_{\Omega(q;R)}(x,y;t)\frac{\varphi_{1,\Omega(q;R)}(y)}{\|\varphi_{1,\Omega(q;R)}\|_{\Leb^{\infty}(\Omega(q;R))}}\nonumber \\ &
=\int_0^{\infty}dt\,e^{-t\lambda_1(\Omega(q;R))}\frac{\varphi_{1,\Omega(q;R)}(x)}{\|\varphi_{1,\Omega(q;R)}\|_{\Leb^{\infty}(\Omega(q;R))}}\nonumber \\ &=\lambda_1(\Omega(q;R))^{-1}\frac{\varphi_{1,\Omega(q;R)}(x)}{\|\varphi_{1,\Omega(q;R)}\|_{\Leb^{\infty}(\Omega(q;R))}}.
\end{align}
First taking the supremum over all $x\in \Omega(q;R)$ in the left-hand side of \eqref{e31}, and subsequently taking the supremum over all such $x$ in the right-hand side of \eqref{e31} gives
\begin{equation}\label{e32}
\|v_{\Omega(q;R)}\|_{\Leb^{\infty}(\Omega(q;R))}\ge \lambda(\Omega(q;R))^{-1}.
\end{equation}
Observe that the torsion function is monotone increasing in $R$. Taking the limit $R\rightarrow \infty$ in the left-hand side of \eqref{e32}, and subsequently in the right-hand side of \eqref{e32} completes the proof.
 \hspace*{\fill }$\square $

\end{document}